\newenvironment{proof}{\noindent {\it Proof.~~}\ }{\  \rule{1mm}{2mm}\medskip}
\newenvironment{proof*}{\noindent {\it Proof.~~}\ }{}
\newtheorem{theorem}{Theorem}
\newtheorem{lemma}[theorem]{Lemma}
\newtheorem{corollary}[theorem]{Corollary}
\newtheorem{proposition}[theorem]{Proposition}
\newtheorem{theirtheorem}{Theorem}
\def\G{\partial}
\newcommand{\oa}{\overline{A}}
\begin{document}
\title{Extensions of the  Moser-Scherck-Kemperman-Wehn Theorem }
\author{ {Y. O. Hamidoune}\thanks{
UPMC Univ Paris 06,
 E. Combinatoire, Case 189, 4 Place Jussieu,
75005 Paris, France.}
}

\maketitle


\begin{abstract} Let $\Gamma =(V,E)$  be a  reflexive
relation having a transitive group of automorphisms and  let $v\in V.$
Let $F$  be a  subset of $V$ with $F\cap \Gamma ^-(v)=\{v\}$.
\begin{itemize}
  \item[(i)] If $F$  is finite, then
$| \Gamma (F)\setminus F|\ge
|\Gamma (v)|-1.$
\item [(ii)]If $F$  is cofinite, then
$| \Gamma (F)\setminus F|\ge
|\Gamma ^- (v)|-1.$
\end{itemize}

In particular,
let $G$  be group, $B$ be a finite subset of $G$
and let $F$ be a finite or a cofinite subset of $G$ such that  $F\cap  B^{-1}=\{1\}$.
Then
$| (FB)\setminus F|\ge
|B|-1.$
The last result (for $F$ finite), is  famous  Moser-Scherck-Kemperman-Wehn Theorem. Its   extension
to cofinite subsets seems new. We give also few applications.

 \end{abstract}

\section{Introduction}

A problem of Moser solved also by Scherck in \cite{sch} states that in ablian group $G$ two finite subsets $A,B$ with $|A\cap B^{-1}|=\{1\}$ verifies the Cauchy-Davenport inequality: $|AB|\ge |A|+|B|-1.$ The validity of this result in the non-abelian case was proved by Kemperman \cite{kempcompl}, mentioning that the result  was independently proved
by When \cite{kempcompl}. In our work,   $A$ could be  infinite. So we shall use the formulation $|AB\setminus A|\ge |B|-1.$

All the known proofs of the Moser-Scherck-Kemperman-Wehn Theorem, used additive transforms. In this work, we obtain a completely different proof.
The Moser-Scherck-Kemperman-Wehn Theorem has important applications
in Number Theory and the reader may find several applications of this beautiful result to the Theory of Non-unique factorization in the text book of Geroldinger-Halter-Koch \cite{gerlodinger}. Recall that this result  is used among other tools by Olson
\cite{olsonaa},  to prove that a  subset $S$ of a finite group $G$ with $|S|\ge 3\sqrt{|G|}$ contains a non-empty subset with a product (under some ordering) $=1$.
The Moser-Scherck-Kemperman-Wehn Theorem is a basic tool in the proof by Gao
that a sequence of elements of an abelian  group $G$ with length
$|G|+d(G)$ contains a $|G|$--sub-sequence summing to $0,$ where $d(G)$ is the maximal size of a sequence of elements of $G$ having no non-empty zero-sum subsequence \cite{gaotnd}. A recent generalization of Gao Theorem is contained in \cite{HweightD}. Notice that the Moser-Scherck-Kemperman-Wehn Theorem
implies easily the following result of Shepherdson \cite{sheph}:

 For every  nonempty subset $S$ of a finite abelian group $G,$
there are $s_1, \cdots , s_k\in S$ such that $k\leq \lceil \frac{|G|}{|S|}\rceil $ and  $\sum _{1\leq i \leq k} s_i=0.$

Some of the above results are related to questions investigated in Combinatorics.
Let $\Gamma=(V,E)$  be a loopless finite (directed) graph  with  $\min \{|\Gamma (x)| : x\in V\}\ge 1$. It is well known that $D$ contains a directed cycle. The smallest cardinality  of such a cycle is called the girth of $D$ and will be denoted by $g(D)$.
In 1978,  Caccetta and H\"aggkvist \cite{CH}  conjectured that
 $$|V|\ge \min (d^+x: x\in V)(g(D)-1)+1.$$

 This conjecture is still largely open. The reader may find references and  results about this question in \cite{Bondy}.

 This conjecture were proved by the author for graphs with a transitive group of automorphisms in  \cite{HEJC}. This result applied to Cayley graphs shows the validity of Shepherdson's  result for all finite groups. Unfortunately we were not aware at that moment of Shepherdson's result. Our proof  \cite{HEJC} is based on the properties of atoms of a finite graph \cite{HATOM} and the Dirac-Menger's Theorem \cite{Bondy2,Nat,tv}.
 Another proof of the
 Caccetta and H\"aggkvist Conjecture for the last class of graphs,  based
 on  the Moser-Scherck-Kemperman-Wehn Theorem  and the representation of point-transitive graphs as coset graphs, was given by Nathanson  in \cite{Nat0}.
 A third proof, using atoms but avoiding the Dirac-Menger's  was given later by the author in \cite{Hspheres}.

In an e-mail cited by Mader \cite{mader},   Seymour suggested  the possibility of the existence of  $\min \{|\Gamma (x)| : x\in V\}\ge 1$ directed cycles  having pairwise a fixed vertex in the intersection.
 Such a property implies clearly the Caccetta and H\"aggkvist \cite{CH}  Conjecture.
 Mader obtained a positive answer
if $\Gamma $ has a transitive group of automorphisms\cite{mader}.

Generalizing all the above results, we prove the following:

Let $\Gamma =(V,E)$  be a  reflexive
relation having a transitive group of automorphisms. Let $v\in V$
and let $F$  be a  subset of $V$ with $F\cap \Gamma ^-(v)=\{v\}$. Then
\begin{itemize}
  \item[(i)] If $F$  is finite, then
$| \Gamma (F)\setminus F|\ge
|\Gamma (v)|-1.$
\item [(ii)]If $F$  is cofinite, then
$| \Gamma (F)\setminus F|\ge
|\Gamma ^- (v)|-1.$
\end{itemize}

In particular, for  i
  an integer $j\ge 1$ with  $\Gamma ^{j-1}(v)\cap \Gamma ^{-}(v)=\{v\}$, we have
 $
|\Gamma ^{j}  (v)|\ge | \Gamma ^{j-1}  (v)| +
|\Gamma (v)|-1.$

The finite case of the  last result is proved by Mader in \cite{mader}.
The same conclusion was obtain under the stronger hypothesis $\Gamma ^j(v)\cap \Gamma ^{-}(v)=\{v\}$ in \cite{Hspheres}. This weak form is enough to show Caccetta and H\"aggkvist Conjecture.

Let $G$  be group and let $B$ be a finite subset of $G.$  Applying this result to the Cayley graph  defined on $G$ by $B,$
we obtain Moser-Scherck-Kemperman-Wehn Theorem and its extension
to cofinite subsets:

Let $F$ be a finite or a cofinite subset of $G$ such that  $F\cap  B^{-1}=\{1\}$.
Then
$| (FB)\setminus F|\ge
|B|-1.$

Our extension to the cofinite case allows a very short proof for the
following result, proved by Losonczy \cite{los} in the abelian case and by    Eliahou-Lecouvey \cite{el} in the general case:

If $1\notin B,$ then here is a permutation $\sigma$ of $B$ such that $x\sigma (x)\notin B$ for every
$x\in B.$

We conclude the paper by a simple proof of Mader's result. According to our approach, Seymour's question may be formulated as follows:

Under which condition, a graph
satisfies the Moser-Scherck-Kemperman-Wehn property?

Our proofs  require properties, developed in Sections 3 and 4, of  weak atoms
and Moser sets. The proofs we obtain for these properties  are easier  than the proofs of the corresponding properties of the  atoms \cite{HATOM}.

The present work is essentially self-contained. Our proof of Mader's Theorem
requires the Dirac-Menger's Theorem. Also our proof of a result due  Losonczy and    Eliahou-Lecouvey requires K\"onig-Hall's Theorem.  Let us state below these two results:

\begin{theirtheorem} ( The Dirac-Menger's Theorem  \cite{Bondy2}) \label{menger}

Let $\Gamma=(V,E)$ be a finite reflexive graph Let $k$ be a
nonnegative integer. Let $x,y\in V$ such that $y\notin \Gamma (x).$ If $|\G (X)|\ge k$ for every subset $X$ with $x\in X$ and   $y\notin \Gamma (X)$,
then there are $k$  disjoint directed paths from $\Gamma (x)$ to $\Gamma ^-(y)$.
\end{theirtheorem}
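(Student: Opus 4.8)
The plan is to read the statement as the vertex form of Menger's theorem, in which the hypothesis ``$|\partial(X)|\ge k$ for every $X$ with $x\in X$ and $y\notin\Gamma(X)$'' is exactly a minimum-cut condition and the conclusion is the matching maximum family of disjoint paths; I would then extract the paths from the integral max-flow--min-cut theorem. First I would reformulate the conclusion. Since $\Gamma$ is reflexive and $y\notin\Gamma(x)$, a directed path whose first vertex lies in $\Gamma(x)$ and whose last vertex lies in $\Gamma^-(y)$ extends, via the arcs $x\to(\text{first})$ and $(\text{last})\to y$, to a directed $x$--$y$ path, and deleting the endpoints of an $x$--$y$ path returns a $\Gamma(x)$-to-$\Gamma^-(y)$ path avoiding $x$ and $y$. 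Two such $x$--$y$ paths are internally vertex-disjoint exactly when the associated $\Gamma(x)$-to-$\Gamma^-(y)$ paths are vertex-disjoint, so it suffices to produce $k$ internally vertex-disjoint $x$--$y$ paths.

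Next I would build the standard auxiliary network that forces vertex-disjointness. Split every vertex $v\in V\setminus\{x,y\}$ into an in-copy $v^-$ and an out-copy $v^+$ joined by an arc $v^-\to v^+$ of capacity $1$; replace each non-loop arc $u\to v$ of $\Gamma$ by $u^+\to v^-$ of capacity $\infty$ (and discard the reflexive loops); let $x$ be the source, with arcs $x\to w^-$ for $w\in\Gamma(x)\setminus\{x\}$, and $y$ the sink, with arcs $u^+\to y$ for $u\in\Gamma^-(y)\setminus\{y\}$. An integral $x$--$y$ flow of value $k$ decomposes into $k$ arc-disjoint $x$--$y$ dipaths, and the unit capacities on the split arcs force the corresponding paths in $\Gamma$ to be internally vertex-disjoint; merging each pair $v^-,v^+$ back to $v$ yields the required paths.

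The heart of the argument is the cut correspondence. A finite-capacity $x$--$y$ cut can use only the unit split arcs, so it has the form $\{v^-\to v^+:v\in S\}$ for a set $S\subseteq V\setminus\{x,y\}$ whose deletion leaves $y$ unreachable from $x$ in $\Gamma$. Taking $X$ to be the set of vertices reachable from $x$ in $\Gamma\setminus S$, one checks that $x\in X$, that $y\notin\Gamma(X)$ (an arc from $X$ to $y$ would make $y$ reachable), and that $\partial(X)\subseteq S$ (any vertex of $\Gamma(X)$ lying outside $X$ must have been blocked by $S$). Hence $|S|\ge|\partial(X)|\ge k$ by hypothesis, so every cut, in particular a minimum one, has capacity at least $k$; by max-flow--min-cut the maximum $x$--$y$ flow is at least $k$, which delivers the $k$ disjoint paths.

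The step I expect to be most delicate is this cut correspondence, specifically verifying that the boundary sets $\partial(X)$ over admissible $X$ capture precisely the minimal $x$--$y$ separators, and that the reflexive loops, which put $v\in\Gamma(v)$ and in particular $x\in\Gamma(x)\cap X$, create neither spurious cuts nor spurious paths. A route that avoids flow theory altogether is the classical induction of Menger on $|E(\Gamma)|$: choose a non-loop arc, try first deleting it and then contracting it, and if neither operation preserves the lower bound on $|\partial(X)|$, exploit the resulting small boundary set to assemble the paths by hand; this is conceptually cleaner but concentrates the same difficulty into the case analysis surrounding the reflexive arcs.
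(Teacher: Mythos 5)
The paper does not prove this statement at all: it is quoted as background (Theorem~\ref{menger}, credited to Bondy--Murty), and the author explicitly defers to the textbooks \cite{Nat,tv} for a proof, using the result only as a black box in the proof of Corollary~\ref{maderth}. So there is nothing in the paper to compare against; what you have written is the standard vertex-splitting reduction of the vertex form of Menger's theorem to integral max-flow--min-cut, and it is essentially correct. Your cut correspondence is sound: for a finite-capacity cut given by split arcs over $S\subseteq V\setminus\{x,y\}$, the reachability set $X$ of $x$ in $\Gamma\setminus S$ satisfies $x\in X$, $y\notin\Gamma(X)$ and $\partial(X)\subseteq S$, so the hypothesis gives $|S|\ge k$; and the reflexive loops are harmless because applying the hypothesis to $X=\{x\}$ already yields $|\Gamma(x)\setminus\{x\}|\ge k$, so restricting the sources to $\Gamma(x)\setminus\{x\}$ and the sinks to $\Gamma^-(y)\setminus\{y\}$ proves a conclusion at least as strong as the one stated. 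Two small points you should make explicit: the arcs out of the source $x$ and into the sink $y$ must be given capacity $\infty$ (or at least $k$), otherwise the claim that a finite-capacity cut consists only of split arcs does not follow; and the passage from an integral flow of value $k$ to $k$ arc-disjoint flow paths, each crossing every split pair at most once, is where the internal vertex-disjointness is actually secured. With those details filled in, your argument is a complete and correct proof; the alternative edge-induction route you sketch at the end would also work but is not needed.
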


\begin{theirtheorem} \label{kh} (K\"onig-Hall's Theorem \cite{Bondy2})
Let ${\Phi}\subset V\times W$ be a relation
with $|V|=|W|<\infty$. Then  the following conditions  are equivalent:
 \begin{itemize}
   \item There is a bijection $\phi :V\mapsto W$ with $\phi (x)\in {\Phi} (x),$ for every $x\in V.$
   \item  $|{\Phi}(Y)|\ge |Y|,$ for every  subset $Y$ of $V.$
 \end{itemize}

\end{theirtheorem}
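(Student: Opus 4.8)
The easy implication is direct: if a bijection $\phi$ with $\phi(x)\in\Phi(x)$ exists, then for any $Y\subseteq V$ the set $\phi(Y)\subseteq\Phi(Y)$ has exactly $|Y|$ elements by injectivity, so $|\Phi(Y)|\ge|Y|$. All the content lies in the converse, for which the plan is to argue by induction on $n=|V|=|W|$. The base case $n=1$ is immediate: the hypothesis gives $|\Phi(\{x\})|\ge1$, and any element of $\Phi(x)$ serves as $\phi(x)$. For the inductive step I would split according to whether the Hall inequality is ever tight on a proper part of $V$.

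\textbf{Case 1 (slack everywhere).} Suppose $|\Phi(Y)|\ge|Y|+1$ for every nonempty proper $Y\subsetneq V$. Then I pick any $x\in V$, match it to an arbitrary $w\in\Phi(x)$ (nonempty since $|\Phi(\{x\})|\ge1$), and delete $x$ from $V$ and $w$ from $W$. For the reduced relation every $Y\subseteq V\setminus\{x\}$ can lose at most $w$ from its image, so $|\Phi(Y)\setminus\{w\}|\ge|\Phi(Y)|-1\ge|Y|$; the hypothesis survives on a set of size $n-1$, and induction completes the bijection.

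\textbf{Case 2 (a tight part).} Suppose some nonempty proper $A\subsetneq V$ satisfies $|\Phi(A)|=|A|$. The restriction of $\Phi$ to $A\times\Phi(A)$ inherits Hall's condition, and since $|A|<n$ induction produces a bijection $A\to\Phi(A)$. For the complementary relation $V\setminus A\to W\setminus\Phi(A)$ I would verify Hall's condition using the identity $\Phi(A\cup Y)=\Phi(A)\cup\Phi(Y)$: for every $Y\subseteq V\setminus A$, $|\Phi(Y)\setminus\Phi(A)|=|\Phi(A)\cup\Phi(Y)|-|\Phi(A)|\ge|A\cup Y|-|A|=|Y|$. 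Induction again yields a bijection on the complement, and gluing the two partial bijections gives the desired $\phi$ on all of $V$.

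The main obstacle, and the one place where care is needed, is the verification of Hall's condition for the complementary relation in Case 2: one must be sure that the image of $Y$ computed inside $W\setminus\Phi(A)$ is exactly $\Phi(Y)\setminus\Phi(A)$, which is precisely where the union identity $\Phi(A\cup Y)=\Phi(A)\cup\Phi(Y)$ and the tightness $|\Phi(A)|=|A|$ combine to preserve the inequality. Everything else is routine bookkeeping on finite sets, and the two cases are exhaustive since either the Hall inequality is strict on all proper parts or it is tight on some proper part.
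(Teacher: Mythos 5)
Your proof is correct: it is the classical Halmos--Vaughan induction on $n=|V|$, splitting on whether Hall's inequality is tight on some nonempty proper subset, and both cases are handled properly (in Case 1 the reduced relation loses at most one element from each image, and in Case 2 the tightness $|\Phi(A)|=|A|$ together with $|\Phi(Y)\setminus\Phi(A)|=|\Phi(A\cup Y)|-|\Phi(A)|$ preserves Hall's condition on the complement). However, the paper takes a different route: it does not prove this theorem at all, but cites it as a known result and merely remarks that it ``follows by applying the Dirac--Menger's Theorem to the graph obtained from $\Phi$ by adding two distinct vertices one dominating $V$ and the other dominated by $W$'' --- i.e.\ it is deduced from Menger's theorem, which the paper already needs for Mader's result, via the standard source--sink construction. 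Your argument buys self-containedness and elementarity (no appeal to Menger), at the cost of a case analysis; the paper's sketch buys brevity by reusing a tool already in its toolkit. Both are standard and both are valid; there is no gap in what you wrote.
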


The reader may find proofs of the Dirac-Menger's Theorem in the text books \cite{Nat,tv} and some applications of this result to Additive Number Theory. Notice that the K\"onig-Hall's Theorem follows by applying
the Dirac-Menger's Theorem to the graph obtained from $\Phi$ by adding two distinct vertices one dominating $V$ and the other dominated par $W.$
\section{Some Terminology}

Let $\Gamma =(V,E)$ be a  relation and let $X\subset V.$
The subrelation $\Gamma [X]$ induced on $X$ is by definition
$(X,(X\times X)\cap E).$
 A function $f : V  \longrightarrow V$ will be called a {\em homomorphism }if for all $x\in V$, we have
$\Gamma (f(x))=f(\Gamma (x))$. A bijective homomorphism is called an {\em automorphism}. The relation $\Gamma$ will be called {\em locally-finite}  if for all $x\in V,$ $|\Gamma (x)|$ and $|\Gamma ^-(x)|$ are finite.

The relation $\Gamma$ will be called {\em point-transitive}  if for all $x,y\in V,$ there is an automorphism $f$
such that $y=f(x)$.
Clearly a point-transitive relation is regular. Let  $S$  be a
subset of   $G$. The relation $(G,E),$ where  $ E=\{ (x,y) : x^{-1}y \
\in S \}$ is called a {\it Cayley relation}.  It will  be denoted by
$\mbox{Cay} (G,S)$.

Let $\Gamma =\mbox{Cay} (G,S)$   and  let   $F \subset G $.
Clearly
 $\Gamma (F)=FS ,$ where $FS=\{xy: x\in F \ \mbox{and}\ y\in S\}$.
 Cayley  graphs are clearly point-transitive.

 We identify graphs (directed graphs) and their relations.
The reader may replace everywhere  the term "relation" by "graph".

 We shall write
 $$\partial _{\Gamma}(X)= \Gamma (X)\setminus X .$$ We also write
 $$\nabla _{\Gamma}(X)= V\setminus \Gamma (X) .$$

When the context is clear the reference to $\Gamma$ will be omitted.

Let $F$  be a subset of  $V.$
Clearly $V=F\cup \nabla(F)\cup \partial (F)$ is a partition.
Notice that  $\partial^{-}(\nabla(F))\cap F=\emptyset,$ otherwise there exist $z\in F$ such that  $\Gamma (z)\cap (\nabla(F))\neq \emptyset$, a contradiction. Hence
\begin{equation}\label{id0}
\partial^{-}(\nabla(F))\subset \partial (F).\end{equation}

The last observation, used extensively in the isoperimetric method \cite{Hast}, contains a useful duality. The next lemma contains a useful sub-modular relation:

\begin{lemma} \cite{Hast}\label{partialsub}{Let $\Gamma =(V,E)$ be a  locally finite   reflexive
graph. Let   $X,Y$     be finite  subsets. Then

 \begin{equation}\label{submodularity}
|\partial  (X \cup Y)|+|\partial  (X \cap Y)|\le |\partial  (X )|+|\partial  ( Y)|
\end{equation}}
\end{lemma}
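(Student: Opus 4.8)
The plan is to reduce the inequality for the boundary operator $\partial$ to the corresponding submodular inequality for the neighborhood operator $\Gamma$, exploiting reflexivity to pass between the two. Since $\Gamma$ is reflexive, every $x$ satisfies $x\in\Gamma(x)$, so $X\subseteq\Gamma(X)$ for every $X$; hence $\partial(X)=\Gamma(X)\setminus X$ has cardinality $|\partial(X)|=|\Gamma(X)|-|X|$, and all the sets in sight are finite because $\Gamma$ is locally finite and $X,Y$ are finite. Applying this identity to $X$, $Y$, $X\cup Y$ and $X\cap Y$, and using inclusion--exclusion $|X\cup Y|+|X\cap Y|=|X|+|Y|$, the $|X|$-type terms cancel and the claimed inequality becomes equivalent to the purely neighborhood statement
$$|\Gamma(X\cup Y)|+|\Gamma(X\cap Y)|\le|\Gamma(X)|+|\Gamma(Y)|.$$

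To prove this, I would invoke the two elementary facts that $\Gamma(X\cup Y)=\Gamma(X)\cup\Gamma(Y)$ (the image of a union is the union of images) and $\Gamma(X\cap Y)\subseteq\Gamma(X)\cap\Gamma(Y)$ (the image of an intersection is contained in the intersection of images); both hold for any relation because $\Gamma(Z)=\bigcup_{z\in Z}\Gamma(z)$. Writing $A=\Gamma(X)$ and $B=\Gamma(Y)$, the first gives $|\Gamma(X\cup Y)|=|A\cup B|$ and the second gives $|\Gamma(X\cap Y)|\le|A\cap B|$; adding these and invoking inclusion--exclusion once more, $|A\cup B|+|A\cap B|=|A|+|B|$, yields exactly the displayed neighborhood inequality, and tracing back the reduction completes the proof.

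The argument has essentially no obstacle: the only point that requires care is the reflexivity reduction $|\partial(X)|=|\Gamma(X)|-|X|$, which is what converts the set-difference boundary into a difference of cardinalities and makes the $|X|$-bookkeeping cancel cleanly. Without reflexivity one would only have $|\partial(X)|=|\Gamma(X)|-|\Gamma(X)\cap X|$, and the terms would not telescope. All finiteness is guaranteed by local finiteness together with the finiteness of $X$ and $Y$, so every cardinality manipulation above is legitimate.
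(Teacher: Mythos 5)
Your proposal is correct and follows essentially the same route as the paper: both reduce $|\partial(X)|$ to $|\Gamma(X)|-|X|$ via reflexivity, use $\Gamma(X\cup Y)=\Gamma(X)\cup\Gamma(Y)$ together with $\Gamma(X\cap Y)\subseteq\Gamma(X)\cap\Gamma(Y)$, and finish with inclusion--exclusion on both the images and the sets themselves. Your write-up is in fact slightly more explicit than the paper's about where reflexivity and local finiteness are used.
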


\begin{proof}

 Observe that
\begin{eqnarray*}
|\Gamma (X\cup Y)|&=&|\Gamma (X)\cup \Gamma (Y)|\\
&=&|\Gamma (X)|+|\Gamma (Y)|-|\Gamma(X)\cap \Gamma (Y)|\\
&\le& |\Gamma (X)|+|\Gamma (Y)|-|\Gamma(X\cap Y)|
\end{eqnarray*}
The result follows now by subtracting the equation $|X\cup Y|=|X|+|Y|-|X\cap Y|$.
\end{proof}

\section{Weak atoms}
 The reader  interested only in the finite case may ignore this section.
 Let $\Gamma =(V,E)$ be a locally finite  reflexive relation.
 We define the {\em weak connectivity} of $\Gamma$ as
\begin{equation}  \label{eq:kappa}
\kappa  (\Gamma)=\min  \{|\partial (X)|\  : \ \ 1\le |X|<\infty \}.
\end{equation}

 A subset $X$ achieving the  minimum in  (\ref{eq:kappa}) is called a
{\em weak fragment} of $\Gamma$. A weak fragment with minimum cardinality
 will be called a {\em weak atom}.

\begin{proposition}  {
Let $\Gamma =(V,E)$ be a
 locally finite   relation. If $F_1$ and $F_2$ are weak
 fragments with a nonempty intersection, then $F_1\cup F_2$ and $F_1\cap F_2$ are weak fragments. In particular, two distinct weak atoms are disjoint. Assume furthermore that $\Gamma$ is point--transitive and
let $A$ be an atom of $\Gamma$. Then the weak atoms induce a partition of $V$. Moreover the subrelation $\Gamma [A]$ induced on $A$ is a point-transitive relation, for any weak atom $A$.
\label{atom}} \end{proposition}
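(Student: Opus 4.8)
The plan is to derive everything from the submodular inequality of Lemma~\ref{partialsub} together with the observation that automorphisms preserve the boundary operator. Write $\kappa=\kappa(\Gamma)$. First I would treat the union/intersection statement. Suppose $F_1,F_2$ are weak fragments, so $|\partial(F_1)|=|\partial(F_2)|=\kappa$, and $F_1\cap F_2\neq\emptyset$. Both $F_1\cup F_2$ and $F_1\cap F_2$ are finite and nonempty (the intersection by hypothesis, the union since it contains $F_1$), hence each is admissible in the minimisation~(\ref{eq:kappa}); thus $|\partial(F_1\cup F_2)|\ge\kappa$ and $|\partial(F_1\cap F_2)|\ge\kappa$. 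Feeding this into
\[
|\partial(F_1\cup F_2)|+|\partial(F_1\cap F_2)|\le|\partial(F_1)|+|\partial(F_2)|=2\kappa
\]
forces both left-hand terms to equal $\kappa$, so $F_1\cup F_2$ and $F_1\cap F_2$ are weak fragments.

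Next I would deduce disjointness of distinct weak atoms. If two weak atoms $A_1,A_2$ met, then $A_1\cap A_2$ would be a weak fragment by the previous step, so $|A_1\cap A_2|\ge|A_1|$ because $A_1$ has minimal cardinality among weak fragments; but $A_1\cap A_2\subseteq A_1$ gives $A_1\cap A_2=A_1$, i.e. $A_1\subseteq A_2$, and symmetrically $A_2\subseteq A_1$. Hence $A_1=A_2$, and distinct weak atoms are disjoint.

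For the partition statement I would use point-transitivity. Since $f$ commutes with $\Gamma$ on sets and is injective, an automorphism satisfies $\partial(f(X))=f(\partial(X))$, so $f$ carries weak fragments to weak fragments and weak atoms to weak atoms. Given any $v\in V$, fix a weak atom $A$ (nonempty) and a point $a\in A$; choosing an automorphism $f$ with $f(a)=v$ yields a weak atom $f(A)\ni v$. Thus the weak atoms cover $V$, and being pairwise disjoint they partition $V$.

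Finally, the point-transitivity of $\Gamma[A]$, which I expect to be the crux. Fix a weak atom $A$ and $a,b\in A$, and choose an automorphism $f$ of $\Gamma$ with $f(a)=b$. Then $f(A)$ is a weak atom containing $b$, so $A$ and $f(A)$ are weak atoms sharing the point $b$; by the disjointness just established, $f(A)=A$, i.e. $f$ restricts to a permutation of $A$. It remains to check this restriction is an automorphism of the induced relation $\Gamma[A]$, whose neighbourhoods are $\Gamma(x)\cap A$ for $x\in A$. For such $x$,
\[
f(\Gamma(x)\cap A)=f(\Gamma(x))\cap f(A)=\Gamma(f(x))\cap A,
\]
using injectivity of $f$, the homomorphism identity $\Gamma(f(x))=f(\Gamma(x))$, and $f(A)=A$. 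Hence $f|_A$ is an automorphism of $\Gamma[A]$ sending $a$ to $b$; as $a,b$ were arbitrary, $\Gamma[A]$ is point-transitive. The main obstacle is exactly this last verification, which hinges on the invariance $f(A)=A$ — precisely what disjointness of weak atoms supplies.
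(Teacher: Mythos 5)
Your proof is correct and follows essentially the same route as the paper's: submodularity forces the union and intersection of overlapping fragments to be fragments, minimality of atoms gives disjointness, and point-transitivity transports a fixed atom onto every vertex, with the stabilisation $f(A)=A$ yielding the induced point-transitivity. You merely spell out a few steps the paper leaves implicit (why the union and intersection are admissible in the minimisation, and the verification that $f|_A$ preserves the induced neighbourhoods).
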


\begin{proof}
By (\ref{submodularity}),
$2\kappa (\Gamma)\le |\partial  (F_1 \cap F_2)|+|\partial  (F_1 \cup F_2)|\le |\partial  (F_1 )|+|\partial ( F_2)|\le 2\kappa (\Gamma).$
Let $A$ be a weak atom and take $v\in A$. For every $y\in V,$ there is an automorphism $\phi$ such that $\phi (v)=y.$ Hence $y$ belongs to the atom $\phi (A).$ In particular, the set of atoms is a partition of $V.$ If $y\in A,$ then $\phi(A)=A,$ and therefore $\phi/A$ is an automorphism of $\Gamma[A]$ with $\phi/A(v)=y$. Thus $\Gamma [A]$  is a point-transitive relation.
\end{proof}

\section{Moser sets}
The formalism we develop here is motivated from one side by the Moser-Scherck-Kempermann-Wehn Theorem and by the notion of vertex-fragment
(a possibly empty set) in finite graphs considered by Mader in \cite{mader}  from the other side. Our Moser sets are related to Mader's
vertex-fragments. But our concept is never empty, works in the infinite case too and leads to easier proofs. In particular, we do not need a duality between positive and negative vertex-fragments, in the spirit of the one introduced in \cite{HATOM}, used extensively in Mader arguments. Also the Dirac-Menger's Theorem,
present in beginning of Mader's formalism, is not needed to prove our main result, generalizing of the Moser-Scherck-Kempermann-Wehn Theorem. We use it only to give a simple proof of Mader's result.

 Let $\Gamma =(V,E)$ be a locally finite  reflexive relation and let $v\in V$.
A set $F$ is said to be a $v$--Moser set if $\Gamma ^-(v)\cap F=\{v\}.$ Put
$$\kappa _{\Gamma}(v)=\min \{|\partial (X)| : X \ \mbox{is a} \ v \mbox{--Moser set}\}.$$
A Moser set $X$ with $|\partial (X)|=\kappa _v(\Gamma)$ will be called a $v$--fragment.

\begin{lemma} Let $F_1$ and $F_2$ be two $v$--fragments.
Then $F_1\cap F_2$ and  $F_1\cup F_2$ are $v$--fragments.
In particular, there exists a $v$--fragment $K_v$ contained in  every $v$--fragment.
\end{lemma}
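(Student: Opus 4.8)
The plan is to show first that each of $F_1\cap F_2$ and $F_1\cup F_2$ is again a $v$--Moser set, and then to use submodularity to push its boundary down to the minimum value $\kappa _{\Gamma}(v)$. Since $\Gamma$ is reflexive we have $v\in\Gamma^-(v)$, so every $v$--Moser set contains $v$; in particular $v\in F_1\cap F_2$. A direct computation then gives $\Gamma^-(v)\cap(F_1\cap F_2)=(\Gamma^-(v)\cap F_1)\cap F_2=\{v\}$ and $\Gamma^-(v)\cap(F_1\cup F_2)=(\Gamma^-(v)\cap F_1)\cup(\Gamma^-(v)\cap F_2)=\{v\}$, so both sets are $v$--Moser sets. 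By the definition of $\kappa _{\Gamma}(v)$ this already yields $|\partial(F_1\cap F_2)|\ge\kappa _{\Gamma}(v)$ and $|\partial(F_1\cup F_2)|\ge\kappa _{\Gamma}(v)$.

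Next I would establish the submodular inequality $|\partial(F_1\cup F_2)|+|\partial(F_1\cap F_2)|\le|\partial(F_1)|+|\partial(F_2)|$. One subtlety is that a $v$--fragment may be cofinite, so Lemma~\ref{partialsub} as stated (which assumes the two sets finite) cannot be quoted directly. However, the inequality holds whenever the two boundaries are finite, and here $|\partial(F_i)|=\kappa _{\Gamma}(v)<\infty$ (finiteness of $\kappa _{\Gamma}(v)$ follows by taking the $v$--Moser set $\{v\}$, whose boundary $\Gamma(v)\setminus\{v\}$ is finite by local finiteness). I would therefore give the counting argument directly: writing $A=\partial(F_1)$, $B=\partial(F_2)$, $C=\partial(F_1\cup F_2)$, $D=\partial(F_1\cap F_2)$, one checks from $\Gamma(X\cup Y)=\Gamma(X)\cup\Gamma(Y)$ and $\Gamma(X\cap Y)\subseteq\Gamma(X)\cap\Gamma(Y)$ that $C\subseteq A\cup B$, that $D\subseteq A\cup B$, and that $C\cap D\subseteq A\cap B$. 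These three inclusions give the pointwise inequality $\mathbf 1_C+\mathbf 1_D\le\mathbf 1_A+\mathbf 1_B$, and summing over $V$ (a finite sum, since $A\cup B$ is finite) yields submodularity. Combining with the previous paragraph, $2\kappa _{\Gamma}(v)\le|\partial(F_1\cap F_2)|+|\partial(F_1\cup F_2)|\le|\partial(F_1)|+|\partial(F_2)|=2\kappa _{\Gamma}(v)$, so equality holds throughout and both $F_1\cap F_2$ and $F_1\cup F_2$ are $v$--fragments.

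For the last assertion I would let $K_v$ be the intersection of all $v$--fragments. Since every $v$--fragment contains $v$, we have $v\in K_v$ and $\Gamma^-(v)\cap K_v=\{v\}$, so $K_v$ is a $v$--Moser set and $|\partial(K_v)|\ge\kappa _{\Gamma}(v)$. The step I expect to be the main obstacle is the reverse inequality $|\partial(K_v)|\le\kappa _{\Gamma}(v)$: because fragments may be infinite, the usual minimum--cardinality argument is unavailable (two infinite fragments can have equal cardinality without being equal), so I would argue instead by finite intersection. Suppose $|\partial(K_v)|\ge\kappa _{\Gamma}(v)+1$ and pick distinct $z_1,\dots,z_{\kappa _{\Gamma}(v)+1}\in\partial(K_v)$. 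As $z_i\notin K_v$, for each $i$ there is a $v$--fragment $F_i$ with $z_i\notin F_i$. By the first part applied iteratively, $F^*=F_1\cap\cdots\cap F_{\kappa _{\Gamma}(v)+1}$ is a $v$--fragment, and $K_v\subseteq F^*$ gives $\Gamma(K_v)\subseteq\Gamma(F^*)$; hence each $z_i\in\partial(K_v)\subseteq\Gamma(K_v)\subseteq\Gamma(F^*)$ while $z_i\notin F_i\supseteq F^*$, so $z_1,\dots,z_{\kappa _{\Gamma}(v)+1}$ are $\kappa _{\Gamma}(v)+1$ distinct elements of $\partial(F^*)$, contradicting $|\partial(F^*)|=\kappa _{\Gamma}(v)$. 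Therefore $|\partial(K_v)|=\kappa _{\Gamma}(v)$, so $K_v$ is a $v$--fragment, and by construction it is contained in every $v$--fragment.
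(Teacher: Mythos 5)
Your proposal is correct, and for the first assertion it follows the same route as the paper: check that intersections and unions of $v$--Moser sets are $v$--Moser sets, then sandwich $|\partial(F_1\cap F_2)|+|\partial(F_1\cup F_2)|$ between $2\kappa_{\Gamma}(v)$ and $|\partial(F_1)|+|\partial(F_2)|=2\kappa_{\Gamma}(v)$ via submodularity. Where you genuinely diverge is in filling two gaps that the paper's two-line proof passes over. First, you are right that Lemma~\ref{partialsub} is stated only for finite sets while a $v$--fragment need not be finite; your pointwise indicator argument from the inclusions $\partial(F_1\cup F_2)\subseteq\partial(F_1)\cup\partial(F_2)$, $\partial(F_1\cap F_2)\subseteq\partial(F_1)\cup\partial(F_2)$ and $\partial(F_1\cup F_2)\cap\partial(F_1\cap F_2)\subseteq\partial(F_1)\cap\partial(F_2)$ is a clean way to get submodularity under the weaker (and here automatic) hypothesis that the boundaries are finite. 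Second, the paper simply asserts the existence of the minimum fragment $K_v$, which in the finite case follows from the usual minimal-cardinality argument but, as you note, needs care when fragments can be infinite; your argument --- take $K_v$ to be the intersection of all $v$--fragments, observe it is a $v$--Moser set, and bound $|\partial(K_v)|$ by trapping any $\kappa_{\Gamma}(v)+1$ boundary points inside the boundary of a finite intersection of fragments --- is correct and supplies exactly the missing justification. The trade-off is length: the paper's version is terser and suffices for readers willing to grant these routine extensions, while yours is self-contained in the locally finite infinite setting where the lemma is actually applied.
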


\begin{proof}
Notice that the intersection
and union of two $v$--Moser sets are $v$--Moser sets.
By (\ref{submodularity}),
$2\kappa _{\Gamma}(v) \le |\partial  (F_1 \cap F_2)|+|\partial  (F_1 \cup F_2)|\le |\partial  (F_1 )|+|\partial ( F_2)|\le 2\kappa _{\Gamma}(v).$
\end{proof}

We shall denote the minimal $v$--fragment containing  $v$ by $K_v$.

Consider the subgraph $\Theta =(V,F),$ where $F=\{(x,y)\in E :  y\in K_x\}$.
 Set $\Psi (x) =\{y :  K_y \subset K_x \ \mbox{and}\  y\in \Theta (x)\}$.

\begin{lemma} \label{mader}
$\Theta^-(x)\setminus \Psi (x) \subset \G(K_x)\setminus \Gamma (x)$.
\end{lemma}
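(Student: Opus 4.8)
The plan is to show that the left-hand side is nothing but $\Theta^-(x)\setminus\{x\}$, and that every vertex there lands in $\partial(K_x)\setminus\Gamma(x)$. First I would observe that the subtraction of $\Psi(x)$ only discards $x$ itself: since $\Gamma$ is reflexive, $x\in\Gamma(x)\cap K_x=\Theta(x)$ and trivially $K_x\subseteq K_x$, so $x\in\Psi(x)$; hence any $y\in\Theta^-(x)\setminus\Psi(x)$ has $y\neq x$. Unwinding the definition of $\Theta^-$, such a $y$ satisfies $(y,x)\in E$ and $x\in K_y$, and I must establish the three facts $y\notin\Gamma(x)$, $y\notin K_x$, and $y\in\Gamma(K_x)$.

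Two of these are immediate from the Moser conditions. For $y\notin\Gamma(x)$: if $(x,y)\in E$ then $x\in\Gamma^-(y)$, and since also $x\in K_y$, the defining property $\Gamma^-(y)\cap K_y=\{y\}$ forces $x=y$, a contradiction. For $y\notin K_x$: from $(y,x)\in E$ we get $y\in\Gamma^-(x)$, so $y\in K_x$ would give $y\in\Gamma^-(x)\cap K_x=\{x\}$, again forcing $y=x$. Together with the containment below, the second fact upgrades $y\in\Gamma(K_x)$ to $y\in\Gamma(K_x)\setminus K_x=\partial(K_x)$.

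The substantial point is $y\in\Gamma(K_x)$, which I would prove by contradiction. Suppose $\Gamma^-(y)\cap K_x=\emptyset$. Because $x\in K_y$, the set $K_x\cap K_y$ is an $x$--Moser set, and because $\Gamma^-(y)\cap K_x=\emptyset$ while $\Gamma^-(y)\cap K_y=\{y\}$, the set $K_x\cup K_y$ is a $y$--Moser set. Feeding these into the submodular inequality (\ref{submodularity}) yields
\[
\kappa_{\Gamma}(x)+\kappa_{\Gamma}(y)\le|\partial(K_x\cap K_y)|+|\partial(K_x\cup K_y)|\le|\partial(K_x)|+|\partial(K_y)|=\kappa_{\Gamma}(x)+\kappa_{\Gamma}(y),
\]
so equality holds throughout and $K_x\cap K_y$ is an $x$--fragment. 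Since $K_x$ is contained in every $x$--fragment, $K_x\subseteq K_x\cap K_y\subseteq K_y$. Now point-transitivity enters decisively: an automorphism sending $x$ to $y$ carries $K_x$ onto a $y$--fragment, so $K_y\subseteq\phi(K_x)$ and $|K_y|\le|K_x|$, and symmetrically $|K_x|\le|K_y|$; thus all minimal fragments are equicardinal. Hence $K_x\subseteq K_y$ forces $K_x=K_y$, contradicting $y\in K_y\setminus K_x$, and therefore $y\in\Gamma(K_x)$.

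I expect this last containment to be the only real obstacle. The asymmetry is that $\partial(K_x)$ is the \emph{out}-boundary, whereas the hypothesis $(y,x)\in E$ only places $y$ in the \emph{in}-boundary $\partial^-(K_x)$; bridging this gap is exactly what forces the submodularity-plus-minimality argument, and the equicardinality of fragments (equivalently, regularity of $\Gamma$) is where point-transitivity becomes indispensable — without it the inclusion genuinely fails. It is also worth flagging that the two easy claims never used $y\notin\Psi(x)$ beyond $y\neq x$: since $\Psi(x)\subseteq\Theta(x)\subseteq K_x$ and $\Theta^-(x)\cap K_x=\{x\}$, removing $\Psi(x)$ from $\Theta^-(x)$ deletes precisely the vertex $x$, so no finer structure of $\Psi$ is required for this lemma.
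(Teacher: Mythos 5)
Your proof is correct and takes essentially the same route as the paper's: in both, the crux is that if $y\in\Theta^-(x)$ lay outside $\Gamma(K_x)$, then $K_x\cap K_y$ and $K_x\cup K_y$ would be an $x$--Moser set and a $y$--Moser set respectively, and submodularity (\ref{submodularity}) together with the minimality of $K_x$ forces $K_x\subseteq K_y$. Your way of closing the argument (equicardinality of the fragments plus $y\notin K_x$) is if anything slightly cleaner than the paper's conclusion ``$y\in\Psi(x)$'', and your remark that $\Theta^-(x)\setminus\Psi(x)=\Theta^-(x)\setminus\{x\}$ is exactly what the paper uses implicitly when it applies the lemma in the proof of Theorem \ref{main}.
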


\begin{proof}

Take $y\in \Theta^-(x)\cap (V\setminus \Gamma (K_x))$. Since   $x\in K_y,$ we have by (\ref{id0}), $\Gamma ^-(y)\subset
\Gamma ^-(V\setminus \Gamma (K_x))\subset V\setminus K_x$. It follows that  $\Gamma ^-(y)\cap (K_x \cup K_y)=\Gamma ^-(y)\cap  K_y=\{y\}.$ Thus $K_x \cup K_y$ is a $y$--Moser set.
Noticing that $K_x \cap K_y$ is an $x$--Moser set,  we have  by (\ref{submodularity}),

$\kappa _{\Gamma}(y)+ \kappa _{\Gamma} (x)\le |\partial  (K_x \cap K_y)|+|\partial  (K_x \cup K_y)|\le |\partial  (K_x )|+|\partial ( K_y)|\le \kappa _{\Gamma}(x)+ \kappa _{\Gamma} (y).$

It follows that $K_x \cap K_y$ is a $x$--Moser set and hence $K_x=K_x \cap K_y$. Thus $y\in \Psi(x).$ Therefore $$\Theta^-(x)\cap (V\setminus \Gamma (K_x))\subset \Psi (x).$$

Take  $y\in \Theta^-(x)\setminus \Psi (x)$. Then $x\in \Theta (y)\subset \Gamma (y).$  By the last relation, we must have $y\in \Gamma (K_x)$.
Since $K_x\cap \Gamma ^-(x)=\{x\},$ we have $y\notin
\Gamma (x)$. In particular $\Theta^-(x)\setminus \Psi (x)\subset \Gamma(x)$.
   The lemma follows now.
\end{proof}

\section{The result and its applications}

\begin{theorem}\label{main}
Let $\Gamma =(V,E)$  be a point-transitive reflexive locally finite
relation and  let $v\in V.$
Let $F$  be a  subset of $V$ with $F\cap \Gamma ^-(v)=\{v\}$.
\begin{itemize}
  \item[(i)] If $F$  is finite, then
$| \Gamma (F)\setminus F|\ge
|\Gamma (v)|-1.$
\item [(ii)]If $F$  is cofinite, then
$| \Gamma (F)\setminus F|\ge
|\Gamma ^- (v)|-1.$
\end{itemize}

\end{theorem}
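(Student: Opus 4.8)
The plan is to handle the two parts by a duality that derives (ii) from (i) applied to the reverse relation $\Gamma^-$, so that the substantive work is the finite case (i). For (i), the first step is to observe that the singleton $\{v\}$ is itself a $v$--Moser set: by reflexivity $v\in\Gamma^-(v)$, whence $\{v\}\cap\Gamma^-(v)=\{v\}$, and $\partial(\{v\})=\Gamma(v)\setminus\{v\}$ has size $|\Gamma(v)|-1$. Thus the least boundary of a finite $v$--Moser set is at most $|\Gamma(v)|-1$, and (i) is equivalent to the assertion that no finite $v$--Moser set does strictly better, i.e. that the minimal finite $v$--fragment $K_v$ equals $\{v\}$, equivalently $\kappa_\Gamma(v)\ge |\Gamma(v)|-1$ with the minimum taken over finite Moser sets.

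To prove $\kappa_\Gamma(v)\ge |\Gamma(v)|-1$ I would argue by contradiction, assuming $\kappa:=\kappa_\Gamma(v)\le |\Gamma(v)|-2$, and count inside $K:=K_v$. For each $x\in K$ one has $\Gamma(x)\subseteq\Gamma(K)=K\cup\partial(K)$, so $|\Gamma(x)\cap K|\ge |\Gamma(x)|-|\partial(K)|=|\Gamma(v)|-\kappa$; summing and using that the edges of $\Gamma[K]$ counted by tails equal those counted by heads gives $\sum_{y\in K}|\Gamma^-(y)\cap K|\ge |K|(|\Gamma(v)|-\kappa)\ge 2|K|$, so the average in--degree inside $K$ is at least $2$. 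On the other hand, a vertex $y\in K$ satisfies $\Gamma^-(y)\cap K=\{y\}$ exactly when $K$ is a $y$--fragment, and by point--transitivity (equal fragment sizes) together with minimality this forces $K_y=K$; the set of such $y$ is precisely the core $\{y\in K:K_y=K\}$, which contains $v$. The role of the auxiliary graph $\Theta$ and of Lemma \ref{mader} is to show this core is too large to coexist with average in--degree $2$: Lemma \ref{mader} bounds $\Theta^-(x)\setminus\Psi(x)$ inside $\partial(K_x)\setminus\Gamma(x)$, and since $\Psi(x)=\{y\in\Gamma(x)\cap K_x:K_y=K_x\}$ records exactly the core of $K_x$ seen from $x$, summing the resulting pointwise inequality over $x$ and applying the handshake identity to $\Theta$ cancels the degree terms and leaves a lower bound on the total core size contradicting the deficiency $|\Gamma(v)|-\kappa\ge 2$. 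This counting, and in particular squeezing out exactly the additive constant $-1$, is where I expect the main difficulty to lie, since it is here that the asymmetry between the out--degree $|\Gamma(v)|$ and the in--degree $|\Gamma^-(v)|$ first intervenes.

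For part (ii), let $F$ be cofinite with $F\cap\Gamma^-(v)=\{v\}$ and set $N:=\nabla(F)=V\setminus\Gamma(F)$, which is finite because $N\subseteq V\setminus F$. The identity (\ref{id0}) gives $\partial^-(N)\subseteq\partial(F)$, hence $|\partial(F)|\ge|\partial_{\Gamma^-}(N)|$, where $\partial_{\Gamma^-}$ denotes the boundary in the reverse relation $\Gamma^-$, which is again point--transitive, reflexive and locally finite. I would then apply part (i) to $\Gamma^-$: the out--neighbourhood of a vertex in $\Gamma^-$ is its $\Gamma$--in--neighbourhood, of size $|\Gamma^-(v)|$, so once $N$ is exhibited as a finite Moser set for $\Gamma^-$ about a suitable centre, part (i) yields $|\partial_{\Gamma^-}(N)|\ge |\Gamma^-(v)|-1$ and hence the claim. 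The delicate point is the choice of centre and the verification of the Moser condition $\Gamma(w)\cap N=\{w\}$ relative to $\Gamma^-$: it fails for $v$ itself, since $v\in F$ forces $v\notin N$, so I would instead first pass to a minimal cofinite $v$--fragment before complementing and choose $w\in N$ witnessing this minimality, recovering the degree $|\Gamma^-(v)|$ by point--transitivity. Establishing that this dual set is genuinely a Moser set for $\Gamma^-$ is the principal obstacle for (ii).
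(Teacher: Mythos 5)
Your overall architecture is right --- reduce (i) to showing $K_v=\{v\}$, then derive (ii) by passing to $\nabla(F)$ and the reverse relation --- but each of the substantive steps has a genuine gap. For (i) in the finite case, your contradiction scheme does not close: the observation that every $x\in K$ has $|\Gamma(x)\cap K|\ge 2$ gives average in--degree at least $2$ in $\Gamma[K]$, but nothing bounds the in--degrees from above, so a large ``core'' of in--degree--one vertices is perfectly compatible with that average and no contradiction is reached. You have also misread $\Psi$: if $y\in\Psi(x)$ then $K_y=K_x$ (fragments have equal cardinality under transitivity), whence $x\in\Gamma^-(y)\cap K_y=\{y\}$, so in fact $\Psi(x)=\{x\}$ for every $x$ --- it is not ``the core of $K_x$ seen from $x$''. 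The paper's argument involves no core at all: the handshake identity for $\Theta$ over the \emph{finite} vertex set produces a single $u$ with $|\Theta^-(u)|\ge|\Theta(u)|=|\Gamma(u)\cap K_u|$, and Lemma \ref{mader} places $\Theta^-(u)\setminus\{u\}$ inside $\partial(K_u)\setminus\Gamma(u)$, disjoint from $\Gamma(u)\setminus K_u\subset\partial(K_u)$; adding the two contributions gives $|\partial(K_u)|\ge|\Gamma(u)|-1$, hence $|K_u|=|K_v|=1$ and $\kappa_\Gamma(v)=|\Gamma(v)|-1$. (A summed version of this can be made to work, but only after establishing $\Psi(x)=\{x\}$, which you do not.)

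The second, larger omission is the case where $V$ is infinite and $F$ finite: there the handshake identity for $\Theta$ is unavailable, and the paper must pass to a finite weak atom $A\ni v$ (Section 3), apply the finite case to the point--transitive induced relation $\Gamma[A]$, and transfer the bound to $F$ by submodularity; your proposal never mentions this reduction, yet part (ii) also depends on it, since it applies (i) to the possibly infinite relation $\Gamma^-$. Finally, for (ii) you correctly reach for $N=\nabla(F)$ and (\ref{id0}) but leave open exactly the point that matters, namely exhibiting a Moser set for $\Gamma^-$. The paper's device is simply $R=N\cup\{v\}$: since $v\in F$ one has $\Gamma(v)\subset\Gamma(F)$, so $R\cap\Gamma(v)=\{v\}$ and $R$ is a finite $v$--Moser set for $\Gamma^-$ centred at $v$ itself, while $\partial^-(R)\subset\partial(F)$ follows from (\ref{id0}) together with $F\cap\Gamma^-(v)=\{v\}$. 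Your alternative of passing to a minimal cofinite fragment and choosing a new centre $w$ is not worked out and is unnecessary.
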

\begin{proof}
Recall that for every $x\in V,$ $|\Gamma (v)|=|\Gamma (x)|$. This is an immediate consequence
of the transitive action of automorphism group. Moreover if $V$ is finite,
$|V||\Gamma (v)|=|E|=|V||\Gamma^- (v)|$.
Thus $|\Gamma (v)|=|\Gamma ^- (v)|.$  In particular, (i) and (ii) are equivalent in the finite case.
For every $x\in V,$ there is an automorphism $\phi$ such that $\phi (v)=x.$ Clearly
$\phi (K_v)= \phi (K_x)$ and hence
$|K_v|=|K_x|.$ If $x\in \Gamma (v),$  then $v\in K_v\setminus K_x,$ and thus we have $K_v \neq K_x$ and hence $K_v\not\subset
K_x$. It follows that $\Psi (x)=\{x\}.$

{\bf Case 1} $V$ is finite.
Since $\sum _{x\in V}|\Psi ^{-}(x)|=\sum _{x\in V}|\Psi (x)|,$
there is a $u\in V$ with $|\Theta(u)|\le  |\Theta^-(u)|$.

 By Lemma \ref{mader},
 $\Theta^-(u) \setminus \{u\} =\Theta^-(u)\setminus \Psi (u)\subset \G(K_u)\setminus \Gamma (u)$. Now we have
\begin{eqnarray*}|\G(K_u)| &\ge& |\Theta ^-(u)\setminus \{u\} |+| \Gamma (u) \setminus K_u|\\&\ge& |\Theta(u)\setminus \{u\}|
+| \Gamma (u)\setminus K_u|\\&\ge& |(\Gamma(u)\cap K_u)\setminus \{u\}|
+| \Gamma (u)\setminus K_u|= |\Gamma (u)|-1.\end{eqnarray*}
Since $ \{u\}$ is a Moser set, we have $|K_u|=1$. It follows that $|K_v|=|K_u|=1.$
Since $F$ is a Moser set, we have $$
| \Gamma (F)\setminus F|\ge  \kappa _v (\Gamma )=
|\Gamma (K_v)|-1=|\Gamma (v)|-1.$$
 The result follows in this case.

{\bf Case 2} $V$ is infinite.

{\bf Subcase 2.1} $F$ is finite.

Let $A$ be an atom of $\Gamma$ containing $v.$ Assume first that $\kappa (\Gamma)=0.$
Every point  $x\in V\setminus A$ is contained in an atom $A_x,$ disjoint from $A$ by Proposition \ref{atom} with
$\partial (A_x)|=0.$ It follows that $\Gamma (F\setminus v)\cap A=\emptyset$. Since
$A$ is finite the result holds for $A$ by Case 1. It follows that $|\Gamma (F)|=|\Gamma (F\setminus A)|+
|\Gamma (F\cap A)|\ge |F\setminus A|+|F\cap A|+|\Gamma (v)|-1.$ So we may assume that $\kappa (\Gamma)>0.$
Observe that $F\cap A$ is a $v$--Moser set of $A$. By case 1, $$
| \Gamma (F\cap A)|\ge |A\cap F|+|A\cap \Gamma (v)|-1.$$
 By the definition of $\kappa,$ we have $|\partial (F\cup,A)|\ge \kappa=\partial (A).$
Notice that $\partial (F\cup A)\setminus \partial (F)\subset \partial (A) \setminus \Gamma (v)$,
and hence \begin{eqnarray*}|\partial (F)|&\ge& | \G (F)\cap A|+ |\partial (F\cap A)\cap \partial (F)|\\ &\ge&
| \G (F)\cap A|+ |\partial (F\cup A)|-|\partial (A) \setminus
 \Gamma (v)|\\&\ge& |A\cap \Gamma (v)|-1+|\partial (A)|-|\partial (A)\setminus \Gamma (v)|\\&=& |A\cap \Gamma (v)|-1+|\partial (A)\cap \Gamma (v)|=|\Gamma (v)|-1.\end{eqnarray*}

 {\bf Subcase 2.2} $F$ is cofinite.
 Put $R=(V\setminus \Gamma (F))\cup \{v\}.$ It follows that $R\cap \Gamma (v)=\{v\}\cap \Gamma (v)=\{v\}.$ In particular, $R$ is a $v$--Moser set for the relation
 $\Gamma ^-$. Also we have using (\ref{id0}), $\G ^-(R)\subset \G^-(V\setminus \Gamma (F))\cup (\G^-(v)\setminus R)\subset \G(F),$ since $F\cap \Gamma ^-(v)=\{v\}.$

 By Subcase 2.1, $|\G (F)|\ge |\G^-(V\setminus \Gamma (F))|\ge
 |\G^-(R)|\ge |\Gamma^- (v)|-1.$
\end{proof}

\begin{corollary}\label{sphere}
Let $\Gamma =(V,E)$  be a point-symmetric reflexive locally finite relation and  let $v\in V.$
Let $j\ge 1$ be an integer such that  $\Gamma ^{j-1}(v)\cap \Gamma ^{-}(v)=\{v\}$. Then
 $$
|\Gamma ^{j}  (v)|\ge | \Gamma ^{j-1}  (v)| +
|\Gamma (v)|-1.$$
\end{corollary}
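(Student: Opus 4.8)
The plan is to apply part (i) of Theorem~\ref{main} directly, with the choice $F=\Gamma^{j-1}(v)$. First I would check that this $F$ meets the hypotheses of the theorem. The required condition $F\cap\Gamma^-(v)=\{v\}$ is precisely the standing assumption $\Gamma^{j-1}(v)\cap\Gamma^-(v)=\{v\}$. Moreover $F$ is finite: since $\Gamma$ is locally finite, each application of $\Gamma$ sends a finite set to a finite set, so starting from the singleton $\{v\}$ and iterating $j-1$ times keeps the image finite. (A point-symmetric relation is in particular point-transitive, so the theorem is applicable.)

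Next I would translate the conclusion $|\Gamma(F)\setminus F|\ge|\Gamma(v)|-1$ back into the language of iterated images. Because $\Gamma$ is reflexive, we have $\Gamma^{k-1}(v)\subset\Gamma^k(v)$ for every $k\ge 1$; in particular $F=\Gamma^{j-1}(v)\subset\Gamma^j(v)=\Gamma(F)$. Hence $\Gamma(F)\setminus F=\Gamma^j(v)\setminus\Gamma^{j-1}(v)$, and since the smaller set is contained in the larger, its cardinality equals $|\Gamma^j(v)|-|\Gamma^{j-1}(v)|$.

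Putting these together yields $|\Gamma^j(v)|-|\Gamma^{j-1}(v)|\ge|\Gamma(v)|-1$, which rearranges to the stated inequality. There is no genuinely hard step: the substance of the corollary is already contained in Theorem~\ref{main}, and the only verifications are the finiteness of $\Gamma^{j-1}(v)$ and the set-theoretic identity $\Gamma(F)\setminus F=\Gamma^j(v)\setminus\Gamma^{j-1}(v)$, both immediate from local finiteness and reflexivity. The single point meriting a moment's attention is the use of reflexivity to guarantee the nesting $\Gamma^{j-1}(v)\subset\Gamma^j(v)$, since without it the quantity $|\Gamma(F)\setminus F|$ would not reduce cleanly to the difference $|\Gamma^j(v)|-|\Gamma^{j-1}(v)|$.
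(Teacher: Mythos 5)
Your proof is correct and is exactly the argument the paper intends: the paper simply states that the corollary is an immediate consequence of Theorem~\ref{main}, and your verification (taking $F=\Gamma^{j-1}(v)$, checking finiteness via local finiteness and the Moser-set condition from the hypothesis, then using reflexivity to identify $\Gamma(F)\setminus F$ with $\Gamma^j(v)\setminus\Gamma^{j-1}(v)$) fills in precisely the routine details the paper omits.
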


The proof is an immediate consequence of Theorem \ref{main}.

The finite case of the  corollary is proved by Mader in \cite{mader}.
The same conclusion was obtained under the stronger hypothesis $\Gamma ^j(v)\cap \Gamma ^{-}(v)=\{v\}$ in \cite{Hspheres}. This weak form is enough to show Caccetta and H\"aggkvist Conjecture.

\begin{corollary}\label{mskw} (The Moser-Scherck-Kemperman-Wehn Theorem and its extension to cofinite sets)

Let $G$  be group and let $S$ be a finite subset of $G.$ Let $F$ be a finite or a cofinite subset of $G$ such that  $F\cap  S^{-1}=\{1\}$.
Then
$| (FS)\setminus F|\ge
|S|-1.$

\end{corollary}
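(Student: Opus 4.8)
The plan is to realize the group-theoretic statement as a special case of Theorem \ref{main} by passing to the Cayley relation. I would set $\Gamma = \mathrm{Cay}(G,S)$ and choose the base point $v = 1$. The first task is to verify that $\Gamma$ satisfies the hypotheses of Theorem \ref{main}. The assumption $F\cap S^{-1}=\{1\}$ in particular forces $1\in S^{-1}$, hence $1\in S$, so $\Gamma$ is reflexive; it is point-transitive because the left translations $x\mapsto gx$ are automorphisms (one checks $\Gamma(gx)=gxS=g(xS)=\phi_g(\Gamma(x))$) and they act transitively on $G$; and it is locally finite since $S$, and therefore $S^{-1}$, is finite.

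Next I would set up the dictionary between the graph quantities and the group operations. Since $\Gamma(x)=xS$ and $\Gamma^-(x)=xS^{-1}$, evaluating at $v=1$ gives $\Gamma(v)=S$ and $\Gamma^-(v)=S^{-1}$. Thus the Moser condition $F\cap\Gamma^-(v)=\{v\}$ is literally the hypothesis $F\cap S^{-1}=\{1\}$, and $\Gamma(F)=FS$, so that $\partial(F)=\Gamma(F)\setminus F=(FS)\setminus F$. A key observation is that $|\Gamma(v)|=|S|=|S^{-1}|=|\Gamma^-(v)|$, so the two bounds $|\Gamma(v)|-1$ and $|\Gamma^-(v)|-1$ appearing in parts (i) and (ii) of Theorem \ref{main} coincide and both equal $|S|-1$.

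With this correspondence in place, the conclusion follows by a direct appeal to Theorem \ref{main}. If $F$ is finite, part (i) yields $|(FS)\setminus F|\ge|\Gamma(v)|-1=|S|-1$; if $F$ is cofinite, part (ii) yields $|(FS)\setminus F|\ge|\Gamma^-(v)|-1=|S|-1$. In either case we obtain the desired inequality $|(FS)\setminus F|\ge|S|-1$.

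Since the argument is a pure specialization, I do not expect a genuine obstacle. The only points requiring care are confirming that reflexivity (equivalently $1\in S$) is automatic from the intersection hypothesis, and checking that the left-regular action really provides automorphisms in the sense defined in the paper, namely $\Gamma(\phi(x))=\phi(\Gamma(x))$; once these are verified, the cofinite extension—the genuinely new content—drops out of part (ii) with no extra work.
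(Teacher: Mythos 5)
Your proposal is correct and takes exactly the same route as the paper, whose entire proof is the single sentence ``apply Theorem \ref{main} to the Cayley relation defined on $G$ by $S$''; you have merely (and correctly) filled in the routine verifications: reflexivity from $1\in S^{-1}$, point-transitivity via left translations, and the dictionary $\Gamma(1)=S$, $\Gamma^-(1)=S^{-1}$, $\Gamma(F)\setminus F=(FS)\setminus F$.
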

\begin{proof}
The corollary follows by applying Theorem \ref{main} to the Cayley relation
defined on $G$ by $S$.\end{proof}

Let $G$ be a group and let  $X\subset G.$ We shall write $\overline{X}=G\setminus X$ and
$\tilde{X}=X\cup \{1\}.$

\begin{corollary}\label{ce} (Losonczy in the abelian case,   Eliahou-Lecouvey in the general case)

Let $G$  be group and let $A$ be a finite subset of $G\setminus \{1\}.$  There is a permutation $\sigma$ of $S$ such that $x\sigma (x)\notin A$ for every
$x\in A.$

\end{corollary}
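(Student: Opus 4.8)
The plan is to encode the desired permutation as a perfect matching and to produce it via K\"onig--Hall's Theorem (Theorem \ref{kh}). I take $V=W=A$ and define the relation $\Phi\subset A\times A$ by $\Phi(x)=\{y\in A : xy\notin A\}=A\cap x^{-1}\oa$, where $\oa=G\setminus A$. A bijection $\sigma$ with $\sigma(x)\in\Phi(x)$ for every $x$ is exactly a permutation of $A$ satisfying $x\sigma(x)\notin A$, so by Theorem \ref{kh} it suffices to verify the Hall condition $|\Phi(Y)|\ge |Y|$ for every $Y\subset A$.

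For the Hall condition I first record the identity $\Phi(Y)=\bigcup_{x\in Y}(A\cap x^{-1}\oa)=A\cap Y^{-1}\oa$. Passing to inverses (which preserves cardinality and sends $\oa$ to $\oaa=G\setminus A^{-1}$) gives $|\Phi(Y)|=|A^{-1}\cap \oaa\,Y|$. The purpose of this rewriting is that $\oaa$ is \emph{cofinite}, so that the extension of the Moser--Scherck--Kemperman--Wehn Theorem to cofinite sets (Corollary \ref{mskw}) becomes available; this is precisely the place where the new half, part (ii), of Theorem \ref{main} is needed.

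Next I would apply Corollary \ref{mskw} with $F=\oaa$ and $S=\tilde Y=Y\cup\{1\}$. The hypothesis $F\cap S^{-1}=\{1\}$ is met: since $1\notin A$ we have $1\in\oaa$, whereas $Y^{-1}\subset A^{-1}$ is disjoint from $\oaa=G\setminus A^{-1}$, so $\oaa\cap(Y^{-1}\cup\{1\})=\{1\}$. Corollary \ref{mskw} then yields $|FS\setminus F|\ge |S|-1=|Y|$. It remains to simplify $FS\setminus F=\oaa\tilde Y\setminus \oaa=\oaa Y\setminus \oaa=\oaa Y\cap A^{-1}$ (the complement of $\oaa$ being $A^{-1}$), whose cardinality is exactly $|\Phi(Y)|$ by the previous paragraph. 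Hence $|\Phi(Y)|\ge|Y|$, the Hall condition holds, and the permutation exists.

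The routine parts are the two set-theoretic identities, $\Phi(Y)=A\cap Y^{-1}\oa$ and $FS\setminus F=\oaa Y\cap A^{-1}$; the genuinely delicate point is realizing that one must transport the problem onto the cofinite set $\oaa$ and adjoin the identity to $Y$ in order to produce the hypothesis $F\cap S^{-1}=\{1\}$. I expect the main thing to get right is the mismatch between the right-multiplication form $FS$ of Corollary \ref{mskw} and the left-multiplication $Y^{-1}\oa$ appearing in $\Phi(Y)$, which the inversion step is designed to reconcile.
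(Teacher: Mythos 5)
Your proof is correct and takes essentially the same route as the paper: the same relation $\Phi(x)=A\cap x^{-1}\oa$, with Hall's condition supplied by the cofinite case of Corollary \ref{mskw} applied to the complement of $A$ with the identity adjoined to the test set. Your extra inversion step (taking $F=\oaa$ and $S=\tilde Y$ instead of the paper's $F=\oa$ and $S=\widetilde{Y^{-1}}$) is only a cosmetic variant, though it has the merit of verifying Hall's condition for $\Phi$ itself rather than for its inverse relation, a left/right bookkeeping point the paper glosses over.
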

\begin{proof}
Define a relation $\Phi$ on $ A,$ where $\Phi (x)=(x^{-1}\oa)\cap A.$
For every $B\subset A^{-1},$ we have clearly $\oa\cap \tilde{B}=\{1\}.$
Since $\oa$ is a cofinite subset, we have by Corollary \ref{mskw},
$|B|\le |(\oa \tilde{B})\setminus \oa|=|(\oa B)\setminus \oa|=|\Phi (B^{-1})|.$
The existence $\sigma$ follows now by Theorem \ref{kh}.
\end{proof}

\begin{corollary}(Mader \cite{mader})\label{maderth}
Let $\Gamma =(V,E)$  be a point-transitive loopless  finite
relation and  let $v\in V.$ There exist $|\Gamma (v)| $ directed cycles in a with a  pairwise intersection $=\{v\}$.
\end{corollary}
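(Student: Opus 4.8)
The plan is to reduce the existence of the cycles to a connectivity statement and then invoke the Dirac--Menger Theorem (Theorem \ref{menger}), the required cut condition being supplied by Theorem \ref{main}. First I would split the vertex $v$ into two vertices $x$ and $y$: let $\Gamma^{*}$ be the graph on $(V\setminus\{v\})\cup\{x,y\}$ in which $x$ inherits the out-edges of $v$ (so $\Gamma^{*}(x)=\Gamma(v)$) and $y$ inherits the in-edges of $v$ (so $\Gamma^{*-}(y)=\Gamma^{-}(v)$), all other edges being unchanged, and add a loop at every vertex so that $\Gamma^{*}$ is reflexive. Because $\Gamma$ is loopless there is no edge $x\to y$, hence $y\notin\Gamma^{*}(x)$ and the hypothesis of Theorem \ref{menger} is available. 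A directed cycle of $\Gamma$ through $v$ is exactly a directed path of $\Gamma^{*}$ from a vertex of $\Gamma(v)$ to a vertex of $\Gamma^{-}(v)$; hence producing $|\Gamma(v)|$ cycles with pairwise intersection $\{v\}$ amounts to producing $|\Gamma(v)|$ vertex-disjoint such paths, that is, $|\Gamma(v)|$ disjoint paths from $\Gamma^{*}(x)$ to $\Gamma^{*-}(y)$.

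To apply Theorem \ref{menger} with $k=|\Gamma(v)|$ I must verify the cut condition $|\partial_{\Gamma^{*}}(X)|\ge|\Gamma(v)|$ for every $X$ with $x\in X$ and $y\notin\Gamma^{*}(X)$. This is the heart of the argument and the step I expect to be the main obstacle, since it is where the isoperimetric input enters. Let $\widehat{\Gamma}$ denote the reflexive closure of $\Gamma$; it is again point-transitive, finite and reflexive, with $|\widehat{\Gamma}(v)|=|\Gamma(v)|+1$, so Theorem \ref{main}(i) applies to it. Given such an $X$, reflexivity forces $y\notin X$, and I set $F=(X\setminus\{x\})\cup\{v\}\subseteq V$. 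The condition $y\notin\Gamma^{*}(X)$ says precisely that no vertex of $X\setminus\{x\}$ is an in-neighbour of $v$, which means $F\cap\widehat{\Gamma}^{-}(v)=\{v\}$; thus $F$ is a $v$--Moser set for $\widehat{\Gamma}$.

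The next step is to check that the splitting does not alter the boundary. Since no vertex of $X\setminus\{x\}$ points to $v$, the out-neighbourhoods computed in $\Gamma^{*}$ and in $\widehat{\Gamma}$ coincide off $v$, and replacing $x$ by $v$ (both lying outside the boundary) yields $\partial_{\Gamma^{*}}(X)=\partial_{\widehat{\Gamma}}(F)$ as subsets of $V\setminus\{v\}$. Theorem \ref{main}(i) then gives $|\partial_{\Gamma^{*}}(X)|=|\partial_{\widehat{\Gamma}}(F)|\ge|\widehat{\Gamma}(v)|-1=|\Gamma(v)|$, which is exactly the cut condition demanded by Theorem \ref{menger}.

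Finally I would invoke Theorem \ref{menger} to obtain $|\Gamma(v)|$ pairwise disjoint directed paths from $\Gamma^{*}(x)=\Gamma(v)$ to $\Gamma^{*-}(y)=\Gamma^{-}(v)$. Reading each path back in $\Gamma$ and closing it through $v$ produces $|\Gamma(v)|$ directed cycles; being vertex-disjoint off $v$, they pairwise meet exactly in $\{v\}$, which is the assertion of Corollary \ref{maderth}.
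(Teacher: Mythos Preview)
Your proposal is correct and follows essentially the same route as the paper: both arguments modify the graph so that Dirac--Menger (Theorem~\ref{menger}) becomes applicable, verify the required cut condition by recognising the cut set as a $v$--Moser set in the reflexive closure $\widehat{\Gamma}$ and invoking Theorem~\ref{main}(i), and then close the resulting disjoint paths into cycles through $v$. The only difference is cosmetic: the paper keeps $v$ and adjoins a single sink vertex $v'$ with $\Phi^{-}(v')=\Gamma^{-}(v)$, whereas you split $v$ into a source $x$ and a sink $y$; under the obvious identification $v\leftrightarrow x$, $v'\leftrightarrow y$ the two auxiliary graphs and the two boundary computations coincide.
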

\begin{proof} Let $\Phi $ be the  graph obtained by a adding vertex $  v'\notin V,$ with $\Phi(v')=\Gamma ^- (v).$ Let $\Theta $ be the reflexive
closure $\Phi$ (obtained by adding loops everywhere. Let $F$ be a subset of $V$
with $v\in F$ and $v'\notin \Theta (F).$ Then clearly $F$ is a $v$--Moser set.
By Theorem \ref{main}, $|\G(F)|\ge |\Theta (v)|.$ By the Dirac-Menger's Theorem \ref{menger},
$\Phi$ contains $|\Gamma (v)|$ disjoint paths from $\Gamma (v)$ to $\Gamma ^-(v')=\Gamma ^-(v)$.
Adding $v$ to each of these paths, we see the existence of $|\Gamma (v)|$ cycles verifying the Corollary.
\end{proof}


\end{document}